\newtheorem{theorem}{Theorem}[section]
\theoremstyle{theorem}
\newtheorem{remark}{Remark}[section]
\numberwithin{equation}{section}
\begin{document}

	\title [Spectral functions] {Spectral functions related to some fractional stochastic differential equations}
	
	\author{Mirko D'Ovidio}
	\address{Department of Basic and Applied Sciences for Engineering\newline Sapienza University of Rome\newline via A. Scarpa 10, Rome, Italy}
	\email[Corresponding author]{mirko.dovidio@uniroma1.it}
	
	\author{Enzo Orsingher}
	\address{Department of Statistical Sciences\newline Sapienza University of Rome\newline p.le A. Moro 5, Rome, Italy}
	\email{enzo.orsingher@uniroma1.it}
	
	\author{Ludmila Sakhno}
	\address{Department of Probability Theory, Statistics and Actuarial Mathematics\newline Taras Shevchenko National University of Kyiv, Volodymyrska 64, Kyiv 01601, Ukraine}
	\email{lms@univ.kiev.ua}

	\keywords{Higher-order heat equations, Weyl fractional derivatives, Airy functions, spectral functions}
	\date{\today}

	\subjclass[2000]{60K99; 60G60}

\begin{abstract}
In this paper we consider fractional higher-order stochastic differential equations of the form
\begin{align*}
\left( \mu + c_\alpha \frac{d^\alpha}{d(-t)^\alpha} \right)^\beta X(t) = \mathcal{E}(t) , \quad t\geq 0,\; \mu>0,\; \beta>0,\; \alpha \in (0,1) \cup \mathbb{N}
\end{align*}		 
where $\mathcal{E}(t)$ is a Gaussian white noise. We derive stochastic processes satisfying the above equations of which we obtain explicitly the covariance functions and the spectral functions.
\end{abstract}
	
	\maketitle

\section{Introduction}
In this paper we consider fractional stochastic ordinary differential equations of different form where the stochastic component is represented by a Gaussian white noise. Most of the fractional equations considered here are related to the higher-order heat equations and thus are connected with pseudoprecesses. 

The first part of the paper considers the following stochastic differential equation
\begin{equation}
	\left(\mu + \frac{d^\alpha}{d (-t)^\alpha} \right)^\beta X(t) \, = \, \mathcal{E}(t), \qquad \beta >0, \, 0<\alpha < 1,\; \mu>0
	\label{sol1-intro-pde}
\end{equation}
where $\frac{d^\alpha}{d (-t)^\alpha}$ represents the upper-Weyl fractional derivative. We obtain a representation of the solution to \eqref{sol1-intro-pde} in the form
\begin{equation}
X(t) \, = \, \frac{1}{\Gamma (\beta)} \int_0^\infty dz \int_0^\infty ds \, s^{\beta -1} e^{-s\mu} \, h_\alpha (z, s)\, \mathcal{E}(t+z) \label{sol1-intro}
\end{equation}
where $h_\alpha(z,s)$, $z,s\geq 0$, is the density function of a positively skewed stable process $H_\alpha(t)$, $t\geq 0$ of order $\alpha \in (0,1)$, that is with Laplace transform
\begin{align*}
\int_0^\infty e^{-\xi z} h_\alpha(z,s)dz= e^{-s \xi^\alpha}, \quad \xi \geq 0.
\end{align*}
For \eqref{sol1-intro}, we obtain the spectral function
\begin{equation}
f(\tau) = \frac{\sigma^2}{\left( \mu^2  + 2 |\tau |^\alpha \mu \cos \frac{\pi \alpha}{2} + |\tau |^{2\alpha} \right)^\beta}, \quad \tau \in \mathbb{R}
\label{spectralfFig}
\end{equation}
and the related covariance function.

The second type of stochastic differential equations we consider has the form 
\begin{equation}
\left(\mu + (-1)^{n} \frac{\partial^{2n}}{\partial t^{2n}} \right)^\beta X(t) \, = \, \mathcal{E}(t), \quad \beta >0,\; \mu >0,\; n\geq 1,  \label{sol2-intro-pde}
\end{equation}
where $\mathcal{E}(t)$ is a Gaussian white noise. The representation of the solution to \eqref{sol2-intro-pde} is
\begin{equation}
X(t) \, = \, \frac{1}{\Gamma \left(\beta \right)} \int_0^\infty dw \, w^{\beta -1} e^{-\mu w} \int_{-\infty}^{+\infty} dx\, u_{2n}(x,w) \mathcal{E}(t+x) \label{sol2-intro}
\end{equation}
where $u_{2n}(x,w)$, $x \in \mathbb{R}$, $w\geq 0$ is the fundamental solution to $2n$-th order heat equation
\begin{equation}
\frac{\partial u}{\partial w}(x,w) = (-1)^{n+1} \frac{\partial^{2n} u}{\partial x^{2n}}(x,w)
\label{h-o-pde}
\end{equation}
The autocovariance function of the process \eqref{sol2-intro} can be written as 
\begin{equation}
\mathbb{E} X(t)X(t+h) = \frac{\sigma^2}{\Gamma (2\beta )}\int_0^\infty dw\,w^{2\beta -1} e^{-\mu w} \, u_{2n}(h, w) = \frac{\sigma^2}{\mu^{2\beta}} \mathbb{E} u_{2n}(h, W_{2\beta})  \label{cov-sol2-intro}
\end{equation}
where $W_{2\beta}$ is a gamma r.v. with parameters $\mu$ and $2\beta$. The spectral function $f(\tau)$ associated with \eqref{cov-sol2-intro} has the fine form
\begin{equation}
f(\tau) = \frac{\sigma^2}{(\mu + \tau^{2n})^{2\beta}}  
\end{equation}
For $n=1$, \eqref{h-o-pde} is the classical heat equation, $u_{2}(x,w) = \frac{e^{-\frac{x^2}{4w}}}{\sqrt{4\pi w}}$ and, from \eqref{cov-sol2-intro} we obtain an explicit form of the covariance function in terms of the modified Bessel functions. In connection with the equations of the form \eqref{h-o-pde} the so-called pseudoprocesses, first introduced at the beginning of the Sixties (\cite{Kry60}) have been constructed. The solutions to \eqref{h-o-pde} are sign-varying and their structure has been explored by means of the steepest descent method (\cite{LiWong, AccOrs})  and their representation has been recently given by \cite{ORSDOV}.

For the fractional odd-order stochastic differential equation 
\begin{equation}
\left( \mu + (-1)^n \frac{d^{2n+1}}{d t^{2n+1}} \right)^\beta X(t) = \mathcal{E}(t), \quad n=1,2,\ldots \label{sol4-intro-pde}
\end{equation}
the solution has the structure
\begin{equation}
X(t) = \frac{1}{\Gamma(\beta)} \int_0^\infty dw \, w^{\beta-1} \int_\mathbb{R}dx\,  u_{2n+1}(x,w) \, \mathcal{E}(t+x) \label{sol4-intro}
\end{equation}
where $u_{2n+1}(x,w)$, $x \in \mathbb{R}$, $w\geq 0$ is the fundamental solution to
\begin{align}
\frac{\partial u}{\partial w}(x,w) = (-1)^{n} \frac{\partial^{2n+1} u}{\partial x^{2n+1}}(x,w).
\end{align}
The solutions $u_{2n+1}$ and $u_{2n}$ are substantially different in their behaviour and structure as shown in \cite{ORSDOV} and \cite{Lac03}.

A special attention has been devoted to the case $n=1$ for which \eqref{sol4-intro} takes the interesting form
\begin{equation}
X_3(t) = \frac{1}{\Gamma(\beta)} \int_0^\infty dw \, w^{\beta-1} \int_\mathbb{R}dx\,   \frac{1}{\sqrt[3]{3w}} Ai\left( \frac{x}{\sqrt[3]{3w}} \right) \mathcal{E}(t+x)
\end{equation}
where $Ai(\cdot)$ is the first-type Airy function . The process $X_3$ can also be represented as 
\begin{equation}
X_3(t) = \frac{1}{\mu^\beta}\mathbb{E} \mathcal{E}(t + Y_3(W_\beta)) 
\end{equation}
where the mean value must be meant w.r.t. $Y_3(W_\beta)$ and $Y_3$ is the pseudoprocess related to equation
\begin{equation}
\frac{\partial u}{\partial t} = - \frac{\partial^3 u}{\partial x^3}
\end{equation}
and $W_\beta$ is a Gamma-distributed r.v. with parameters $\beta, \mu$ independent from $Y_3$. The autocovariance function of $X_3$ has the following form
\begin{equation}
\mathbb{E} X_3(t) X_3(t+h) = \frac{\sigma^2}{\mu^{2\beta}}\, \mathbb{E} \left[ \frac{1}{\sqrt[3]{3W_{2\beta}}} Ai\left( \frac{h}{\sqrt[3]{3W_{2\beta}}}\right) \right] \label{cov3ord}
\end{equation}
where $W_{2\beta}$ is the sum of two independent r.v.'s $W_\beta$. 
For the solution to the general odd-order stochastic equation we obtain the covariance function 
\begin{equation}
\mathbb{E} X(t) X(t+h) = \frac{\sigma^2}{\mu^{2\beta}} \, \mathbb{E}\left[ u_{2n+1}(h, W_{2\beta}) \right] \label{cov-aa}
\end{equation}
Of course, the Fourier transform of \eqref{cov-aa} becomes 
\begin{equation}
\label{spect-func-aa}
f(\tau) = \frac{1}{\mu^{2\beta}} \int_{\mathbb{R}} e^{i\tau h} \sigma^2 \, \mathbb{E}\left[ u_{2n+1}(h, W_{2\beta}) \right] dh = \left( \frac{\mu }{\mu + i \tau^{2n+1}} \right)^{2\beta}.
\end{equation}
Stochastic fractional differential equations similar to those dealt with here have been analysed in \cite{Angulo08}, \cite{GayHeyde} and \cite{KLM05}. In our paper we consider equations where different operators are involved.

\section{A stochastic equation involving fractional powers of fractional operators}

In this section we consider the following generalization of the Gay and Heyde equation (see \cite{GayHeyde})
\begin{equation}
	\left(\mu + \frac{d^\alpha}{d (-t)^\alpha} \right)^\beta X(t) \, = \, \mathcal{E}(t), \qquad \beta >0, \, 0<\alpha < 1,\; \mu>0
	\label{1}
\end{equation}
where $\mathcal{E}(t)$, $t>0$, is a Gaussian white noise with
\begin{equation}
	\mathbb{E}\mathcal{E}(t)\mathcal{E}(s) \, = \left\lbrace
	\begin{array}{ll}
	\sigma^2, & t=s\\
	0, & t \neq s
	\end{array} \right. .
\end{equation}
The fractional derivative appearing in \eqref{1} must be meant, for $0 < \alpha \leq 1$, as 
\begin{align}
\frac{d^\alpha}{d (-t)^\alpha} f(t) = & - \frac{1}{\Gamma(1-\alpha)} \frac{d}{dt} \int_t^\infty \frac{f(s)}{(s-t)^\alpha} ds\\
= & \frac{\alpha}{\Gamma(1-\alpha)} \int_0^\infty \frac{f(t) - f(t+w)}{w^{\alpha + 1}} dw.
\end{align}
For information on fractional derivatives of this form, called also Marchaud derivatives, consult  \cite[pag.  111]{SKM}. For $\lambda\geq 0$, we introduce the Laplace transform
\begin{align}
\mathcal{L}\left[\frac{d^\alpha f}{d (-t)^\alpha}\right](\lambda) = \int_0^\infty e^{\lambda t} \frac{d^\alpha}{d (-t)^\alpha} f(t)dt = \lambda^\alpha \mathcal{L}[f](\lambda)
\end{align} 
which can be immediately obtained by considering that
\begin{align}
\mathcal{L}\left[\frac{d^\alpha f}{d (-t)^\alpha}\right](\lambda) =  \frac{\alpha}{\Gamma(1-\alpha)} \int_0^\infty \left( \mathcal{L}[f](\lambda) - e^{-w \lambda} \mathcal{L}[f](\lambda) \right) \frac{dw}{w^{\alpha +1}}
\end{align} 
for a function $f$ such that $e^{\lambda t} f(t) \in L^1([0, \infty))$.

%
%
%
%

%

\begin{theorem}
The representation of a solution to the equation \eqref{1} can be written as
\begin{equation}
	X(t) \, = \, \frac{1}{\Gamma (\beta)} \int_0^\infty dz \int_0^\infty ds \, s^{\beta -1} e^{-s\mu} \, h_\alpha (z, s)\, \mathcal{E}(t+z)
	\label{110THM}
\end{equation}
\end{theorem}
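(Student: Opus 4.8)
The plan is to realize the solution operator $\left(\mu + \frac{d^\alpha}{d(-t)^\alpha}\right)^{-\beta}$ as a convolution, in the forward-shifted sense, against an explicit Green's kernel $G$, so that the solution acquires the form $X(t) = \int_0^\infty G(z)\,\mathcal{E}(t+z)\,dz$. The guiding principle is the Laplace-symbol calculus recorded just before the statement: under the backward transform $\mathcal{L}[f](\lambda) = \int_0^\infty e^{\lambda t} f(t)\,dt$ one has $\mathcal{L}\left[\frac{d^\alpha f}{d(-t)^\alpha}\right](\lambda) = \lambda^\alpha \mathcal{L}[f](\lambda)$, so the operator $\mu + \frac{d^\alpha}{d(-t)^\alpha}$ acts as multiplication by $\mu + \lambda^\alpha$ and its $\beta$-th power as multiplication by $(\mu+\lambda^\alpha)^\beta$. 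Inverting the equation \eqref{1} in the transform domain then forces the kernel $G$ (a function of $z\ge 0$) to satisfy $\int_0^\infty e^{-\lambda z} G(z)\,dz = (\mu + \lambda^\alpha)^{-\beta}$.

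The second step is to produce such a $G$ explicitly by subordination. I would start from the Gamma representation of the negative power,
\[
(\mu + \lambda^\alpha)^{-\beta} = \frac{1}{\Gamma(\beta)}\int_0^\infty s^{\beta-1} e^{-s(\mu + \lambda^\alpha)}\,ds = \frac{1}{\Gamma(\beta)}\int_0^\infty s^{\beta-1} e^{-s\mu}\, e^{-s\lambda^\alpha}\,ds,
\]
valid for $\mu>0$, $\beta>0$, and then replace the factor $e^{-s\lambda^\alpha}$ by the Laplace transform of the positively skewed stable density, namely $e^{-s\lambda^\alpha} = \int_0^\infty e^{-\lambda z} h_\alpha(z,s)\,dz$. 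Interchanging the $s$- and $z$-integrals by Fubini (the integrand is nonnegative) identifies the kernel as
\[
G(z) = \frac{1}{\Gamma(\beta)}\int_0^\infty s^{\beta-1} e^{-s\mu}\, h_\alpha(z,s)\,ds,
\]
and, reading the computation backwards, confirms that its Laplace transform is exactly $(\mu+\lambda^\alpha)^{-\beta}$, i.e. that $G$ is the Green's function of the operator.

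It then remains to verify that $X(t) = \int_0^\infty G(z)\,\mathcal{E}(t+z)\,dz$, which is precisely the claimed representation \eqref{110THM} once $G$ is inserted, solves \eqref{1}. Applying $\left(\mu + \frac{d^\alpha}{d(-t)^\alpha}\right)^\beta$ to this convolution and carrying the operator through the $z$-integral, its action on the forward shift $\mathcal{E}(t+z)$ combines with $G$ through the symbol identity of the first step; since the symbol of the operator and the symbol $(\mu+\lambda^\alpha)^{-\beta}$ of $G$ multiply to $1$, the operator returns $\mathcal{E}(t)$. The forward shift $\mathcal{E}(t+z)$, with $z\ge 0$, is exactly what matches the support of $G$ on $[0,\infty)$ to the forward-looking structure of the upper-Weyl derivative, and it is the reason the backward transform (rather than the ordinary one) is the right bookkeeping device.

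The main obstacle is not the algebra but the justification of the operational calculus for the non-integer power $\beta$: one must argue that $\left(\mu + \frac{d^\alpha}{d(-t)^\alpha}\right)^\beta$ is genuinely diagonalized by the backward Laplace transform with symbol $(\mu+\lambda^\alpha)^\beta$, control the convergence of that transform on the relevant class of functions, and make sense of the convolution $G*\mathcal{E}$ and of applying the operator to white noise in the appropriate ($L^2$ or distributional) sense. The subordination identity for $e^{-s\lambda^\alpha}$ and the Fubini interchange are the two points where integrability must be checked explicitly, but both are routine once the stable density $h_\alpha(\cdot,s)$ is used.
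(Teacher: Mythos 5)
Your proposal is correct and follows essentially the same route as the paper: the Gamma-integral representation of the negative $\beta$-th power combined with the subordination identity $e^{-s\lambda^\alpha}=\int_0^\infty e^{-\lambda z}h_\alpha(z,s)\,dz$. The only difference is presentational --- you work on the Laplace symbol and read off the Green's kernel, whereas the paper performs the same two steps at the operator level, identifying $e^{-s\,d^\alpha/d(-t)^\alpha}$ with $\mathbb{E}\,e^{H^\alpha(s)\,d/dt}$ and invoking the translation property $e^{z\,d/dt}\mathcal{E}(t)=\mathcal{E}(t+z)$.
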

\begin{proof}
The solution to the equation \eqref{1} can be obtained as follows
\begin{align}
	X(t) \, & = \,  \left(\frac{d^\alpha}{d(-t)^\alpha} + \mu \right)^{-\beta} \mathcal{E}(t) \notag \\
	& = \, \frac{1}{\Gamma \left(\beta \right)} \int_0^\infty  s^{\beta -1} e^{-s\mu -s \frac{d^\alpha}{d(-t)^\alpha}} \mathcal{E}(t) \,ds\notag \\
	& = \, \frac{1}{\Gamma \left(\beta \right)} \int_0^\infty  s^{\beta -1} e^{-s\mu} \left\lbrace e^{-s \frac{d^\alpha}{d(-t)^\alpha} } \mathcal{E}(t) \right\rbrace\, ds.
	\label{13}
\end{align}

Now, for the stable subordinator $H^\alpha (t)$, $t>0$, we have that
\begin{align}
	e^{-s \frac{d^\alpha}{d(-t)^\alpha}} \mathcal{E}(t) \, & = \, \mathbb{E}e^{H^\alpha (s) \frac{d}{dt}} \mathcal{E}(t) \notag \\
	& = \, \int_0^\infty dz \, h_\alpha (z, s) \, e^{z \frac{d}{dt}} \mathcal{E}(t) \notag \\
	& = \, \int_0^\infty dz \, h_\alpha (z, s) \, \mathcal{E}(t+z)
	\label{lastStep}
\end{align}
where $h_\alpha (z, s)$ is the probability law of $H^\alpha (s)$, $s>0$. In the last step of \eqref{lastStep} we used the translation property 
\begin{equation}
	e^{z \frac{d}{dt}} \mathcal{E}(t) \, = \, \mathcal{E}(t+z).
	\label{15}
\end{equation}
This is because
\begin{equation}
	e^{z \frac{d}{dt}} \phi(t) \, = \, \sum_{k=0}^\infty \frac{z^k}{k!} \frac{d^k}{dt^k} \, \phi(t).
	\label{}
\end{equation}
In view of the Taylor expansion
\begin{equation}
	f(x) \, = \,  \sum_{k=0}^\infty f^{(k)} (x_0) \frac{(x-x_0)^k}{k!}
	\label{}
\end{equation}
with $x_0 = t$ and $x= t+z$ we have that
\begin{equation}
	e^{z \frac{d}{dt}} \phi(t) \, = \, \phi(t+z)\label{shift}
\end{equation}
which holds for a bounded and continuous function $\phi : [0, \infty) \mapsto [0, \infty)$. Since we can find a sequence of r.v.'s $\{a_j\}_{j \in \mathbb{N}}$ and an orthonormal set, say $\{\phi_j\}_{j \in \mathbb{N}}$, for which \eqref{shift} holds true $\forall\, j$ and such that
\begin{align*}
\lim_{N \to \infty} \mathbb{E}\, \bigg\| \mathcal{E} - \sum_{j=1}^N a_j \phi_j \bigg\|_2=0,
\end{align*}
we can write \eqref{15}. Therefore,
\begin{equation}
	X(t) \, = \, \frac{1}{\Gamma (\beta)} \int_0^\infty dz \int_0^\infty ds \, s^{\beta -1} e^{-s\mu}  \, h_\alpha (z, s)\, \mathcal{E}(t+z)
	\label{110}
\end{equation}
is the formal solution to the fractional equation \eqref{1} with representation, in mean square sense, given by
\begin{equation}
	X(t) \, = \frac{1}{\mu^\beta} \sum_{j \in \mathbb{N}} a_j \, \mathbb{E} [\phi_j(t + H^\alpha(W_\beta))] , \quad t>0.
\end{equation}

\end{proof}

\begin{remark}
For the case $\alpha = 1$, $h_\alpha (z, s) = \delta(z-s)$ where $\delta$ is the Dirac delta function and from \eqref{15} we infer that
\begin{equation}
	X(t) \, = \, \frac{1}{\Gamma \left(\beta \right)} \int_0^\infty e^{-\mu s} s^{\beta -1} \, \mathcal{E}(t+s) \, ds
	\label{}
\end{equation}
solves the fractional equation
\begin{equation}
	\left(\mu - \frac{d}{dt} \right)^\beta X(t) \, = \, \mathcal{E}(t).
	\label{}
\end{equation}
Consult on this point \cite{KLM05}.

A direct proof is also possible because from \eqref{13} we have that
\begin{align}
	X(t) \, & = \, \frac{1}{\Gamma \left(\beta \right)} \int_0^\infty  s^{\beta -1} \, e^{-\mu s} e^{s \frac{d}{dt}} \mathcal{E}(t) \, ds \notag \\
	& = \,\frac{1}{\Gamma \left(\beta \right)} \int_0^\infty  s^{\beta -1} \, e^{-\mu s} \mathcal{E}(t+s) \, ds.
	\label{}
\end{align}
In the last step we applied \eqref{15}.
\end{remark}

\begin{remark}
For $\alpha=1$ and $\beta=1$, we observe that \eqref{110THM} becomes the Ornstein-Uhlenbeck process. 
\end{remark}

Our next step is the evaluation of the Fourier transform of the covariance function of the solution to the differential equation \eqref{1}. Let 
\begin{equation*}
f(\tau) = \int_{-\infty}^{+\infty} e^{i\tau h} Cov_X(h) dh
\end{equation*}
where
\begin{equation*}
Cov_X(h) = \mathbb{E}[X(t+h)X(t)]
\end{equation*}
with $\mathbb{E}X(t)=0$. 

\begin{theorem}
The spectral density of \eqref{110THM} is
\begin{equation}
f(\tau) = \frac{\sigma^2}{\left( \mu^2  + 2 |\tau |^\alpha \mu \cos \frac{\pi \alpha}{2} + |\tau |^{2\alpha} \right)^\beta}, \quad \tau \in \mathbb{R},\; 0 < \alpha < 1,\; \beta>0.
\end{equation}
\end{theorem}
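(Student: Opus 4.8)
The plan is to exploit the fact that $X(t)$ in \eqref{110THM} is a linear filter of the white noise, and that the spectral density of a filtered white noise is $\sigma^2$ times the squared modulus of the transfer function. Writing
\begin{equation*}
X(t) = \int_0^\infty g(u)\,\mathcal{E}(t+u)\,du, \qquad g(u) = \frac{1}{\Gamma(\beta)}\int_0^\infty s^{\beta-1}e^{-s\mu}\,h_\alpha(u,s)\,ds,
\end{equation*}
I would first compute the covariance. Using $\mathbb{E}\,\mathcal{E}(t+h+u)\mathcal{E}(t+v) = \sigma^2\delta(h+u-v)$, the fourfold integral defining $\mathbb{E}[X(t+h)X(t)]$ collapses to $Cov_X(h) = \sigma^2\int_0^\infty g(u)g(u+h)\,du$, so that $Cov_X$ depends only on $h$ and the process is stationary. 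Taking the Fourier transform and changing variables then yields $f(\tau) = \sigma^2\,|\hat g(\tau)|^2$ with $\hat g(\tau) = \int_0^\infty e^{i\tau u}g(u)\,du$.

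The core computation is the evaluation of $\hat g(\tau)$. After interchanging the $u$- and $s$-integrals (justified since $h_\alpha(\cdot,s)$ is a probability density and the $s$-integral converges absolutely), the inner integral $\int_0^\infty e^{i\tau u}h_\alpha(u,s)\,du$ is precisely the characteristic function of the stable subordinator $H^\alpha(s)$. From the Laplace transform $\int_0^\infty e^{-\xi z}h_\alpha(z,s)\,dz = e^{-s\xi^\alpha}$ recalled in the introduction, analytic continuation to $\xi = -i\tau$ gives
\begin{equation*}
\int_0^\infty e^{i\tau u}h_\alpha(u,s)\,du = e^{-s(-i\tau)^\alpha}, \qquad (-i\tau)^\alpha = |\tau|^\alpha e^{-i\,\mathrm{sgn}(\tau)\pi\alpha/2},
\end{equation*}
where the branch is fixed by $-i\tau = |\tau|\,e^{-i\,\mathrm{sgn}(\tau)\pi/2}$. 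Inserting this and performing the remaining $s$-integral as a Gamma integral, which converges because $\mathrm{Re}\,(\mu + (-i\tau)^\alpha) = \mu + |\tau|^\alpha\cos\frac{\pi\alpha}{2} > 0$ for $0<\alpha<1$, $\mu>0$, produces
\begin{equation*}
\hat g(\tau) = \frac{1}{\big(\mu + (-i\tau)^\alpha\big)^\beta}.
\end{equation*}

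It then remains to form $|\hat g(\tau)|^2 = |\mu + (-i\tau)^\alpha|^{-2\beta}$. Writing $(-i\tau)^\alpha = |\tau|^\alpha\big(\cos\frac{\pi\alpha}{2} - i\,\mathrm{sgn}(\tau)\sin\frac{\pi\alpha}{2}\big)$ and expanding the squared modulus of $\mu + |\tau|^\alpha\cos\frac{\pi\alpha}{2} - i\,\mathrm{sgn}(\tau)|\tau|^\alpha\sin\frac{\pi\alpha}{2}$, the cross term contributes $2\mu|\tau|^\alpha\cos\frac{\pi\alpha}{2}$ while the $\cos^2+\sin^2$ part collapses to $|\tau|^{2\alpha}$, giving the announced denominator and hence $f(\tau) = \sigma^2/\big(\mu^2 + 2|\tau|^\alpha\mu\cos\frac{\pi\alpha}{2} + |\tau|^{2\alpha}\big)^\beta$.

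The main obstacle is the step of replacing $\xi$ by the purely imaginary $-i\tau$ in the Laplace transform: one must verify that the characteristic function of the subordinator is obtained from $e^{-s\xi^\alpha}$ by taking boundary values as $\mathrm{Re}\,\xi\downarrow 0$ with the correct branch of $\xi^\alpha$, which is exactly what makes the sign of the imaginary part—and therefore the $\cos\frac{\pi\alpha}{2}$ in the cross term rather than some other phase—come out correctly. Everything else reduces to the Gamma integral and an elementary modulus computation.
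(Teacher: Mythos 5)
Your proposal is correct and follows essentially the same route as the paper: the paper likewise plugs the representation \eqref{110THM} into the Fourier transform of the covariance, uses the delta-correlation of $\mathcal{E}$ to reduce everything to the two characteristic-function integrals $\int_0^\infty e^{\pm i\tau z}h_\alpha(z,s)\,dz = e^{-s(\mp i\tau)^\alpha}$, performs the Gamma integrals in $s_1,s_2$ to obtain $\sigma^2\bigl(\mu+(-i\tau)^\alpha\bigr)^{-\beta}\bigl(\mu+(i\tau)^\alpha\bigr)^{-\beta}$, and expands the product exactly as in your modulus computation. Your repackaging of this as $f(\tau)=\sigma^2|\hat g(\tau)|^2$ for the filter kernel $g$ is just a cleaner bookkeeping of the same steps, with the same branch convention $(-i\tau)^\alpha=|\tau|^\alpha e^{-i\,\mathrm{sgn}(\tau)\pi\alpha/2}$.
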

\begin{proof}
The Fourier transform of the covariance function of lag $h=t_2-t_1$ of \eqref{110THM} is given by
\begin{align*}
&  \int_0^\infty \int_0^\infty e^{i \tau \left(t_2 - t_1 \right)} \mathbb{E}X(t_1)X(t_2) \, dt_1 \, dt_2 \, \notag \\
& = \, \frac{1}{\Gamma^2(\beta )} \int_0^\infty \int_0^\infty e^{i \tau \left(t_2 - t_1 \right)} dt_1 \, dt_2 \int_0^\infty dz_1 \int_0^\infty ds_1 \int_0^\infty ds_2 \int_0^\infty dz_2 \, s_1^{\beta -1} \, s_2^{\beta -1} \notag \\
	& \qquad \times e^{- \left(s_1 + s_2 \right)\mu} h_\alpha \left(z_1, s_1 \right)\, h_\alpha \left(z_2, s_2 \right)\, \mathbb{E}\mathcal{E}(t_1 + z_1)\mathcal{E}(t_2 + z_2) \notag 
\end{align*}
where
\begin{align}
\label{CovE}
\mathbb{E}\mathcal{E}(t_1 + z_1)\mathcal{E}(t_2 + z_2)  = \left\lbrace 
\begin{array}{ll}
\sigma^2, & h=z_1-z_2\\
0, & h \neq z_1 - z_2
\end{array} \right. .
\end{align}
Thus, 
\begin{align*}
\int_0^\infty \int_0^\infty e^{i \tau \left(t_2 - t_1 \right)} \mathbb{E}X(t_1)X(t_2) \, dt_1 \, dt_2 = &  \frac{\sigma^2}{\Gamma^2(\beta )} \int_0^\infty dz_1 \int_0^\infty ds_1 \int_0^\infty ds_2 \int_0^\infty dz_2 \, s_1^{\beta -1} \, s_2^{\beta -1} \notag \\
	& \qquad \times e^{- \left(s_1 + s_2 \right)\mu} h_\alpha \left(z_1, s_1 \right)\, h_\alpha \left(z_2, s_2 \right)\, e^{i \tau \left(z_1 - z_2 \right)} \notag .
\end{align*}
By considering the characteristic function of a positively-skewed stable process with law $h_\alpha$, we have that
\begin{equation}
	\int_0^\infty e^{i\tau \, z_1} h_\alpha \left(z_1, s_1 \right)\, dz_1 \, = \, e^{-\left(-i\tau \right)^\alpha s_1} = e^{-s_1 |\tau |^\alpha e^{-i \frac{\pi}{2}\, sgn\, \tau}},
	\label{}
\end{equation}
and
\begin{equation}
	\int_0^\infty e^{-i\tau z_2} h_\alpha \left(z_2, s_2 \right)dz_2 \, = \, e^{-\left(i\tau \right)^\alpha s_2} = e^{-s_2 |\tau |^\alpha e^{i \frac{\pi}{2}\, sgn\, \tau}}.
	\label{}
\end{equation}
Thus, we obtain that
\begin{align*}
&  \int_0^\infty \int_0^\infty e^{i \tau \left(t_2 - t_1 \right)} \mathbb{E}X(t_1)X(t_2) \, dt_1 \, dt_2 \, \notag \\
& = \, \frac{\sigma^2}{\Gamma^2(\beta )}  \int_0^\infty ds_1 \int_0^\infty ds_2\, s_1^{\beta -1} \, s_2^{\beta -1} e^{- \left(s_1 + s_2 \right)\mu} \, e^{-\left(i\tau \right)^\alpha s_2 -\left(-i\tau \right)^\alpha s_1} \notag \\
	& = \, \frac{\sigma^2}{\left( \mu + \left| \tau \right|^\alpha e^{-\frac{i\pi \alpha}{2} \,sgn\,\tau} \right)^\beta \left( \mu + \left| \tau \right|^\alpha e^{\frac{i\pi \alpha}{2}\,sgn\,\tau} \right)^\beta} \notag \\
	& = \, \frac{\sigma^2}{\left( \mu^2  + 2 |\tau |^\alpha \mu \cos \frac{\pi \alpha}{2} + |\tau |^{2\alpha} \right)^\beta}.
\end{align*}
\end{proof}

\begin{remark}
In the special case $\alpha = 1$ the result above simplifies and yields
\begin{align}
f(\tau) = & \frac{\sigma^2}{ (\mu^2 + \tau^2 )^\beta }.\label{222becomes}
\end{align}
We note that for $\beta=1$, \eqref{222becomes} becomes the spectral function of the Ornstein-Uhlenbeck process. Processes with the spectral function $f$ are dealt with, for example, in \cite{Angulo08} where also space-time random fields governed by stochastic equations are considered. The covariance function is given by
\begin{align*}
Cov_X(h) = & \frac{1}{2\pi} \int_\mathbb{R} e^{-i \tau h} f(\tau) d\tau \\
= & \frac{\sigma^2}{2\pi} \int_\mathbb{R} e^{-i \tau h} \left( \frac{1}{\Gamma(\beta)}\int_0^\infty z^{\beta - 1} e^{-z \mu^2 - z\tau^2} dz \right) d\tau\\
= &  \frac{\sigma^2}{\Gamma(\beta)} \int_0^\infty z^{\beta - 1} e^{-z \mu^2} \left( \frac{1}{2\pi} \int_\mathbb{R} e^{-i \tau h - z \tau^2} d\tau \right) dz\\
= & \frac{\sigma^2}{\Gamma(\beta)} \int_0^\infty z^{\beta - 1} e^{-z \mu^2} \frac{e^{-\frac{h^2}{4z}}}{\sqrt{4\pi z}} dz\\
= &  \frac{\sigma^2}{2\Gamma(\beta) \Gamma(\frac{1}{2})} \int_0^\infty z^{\beta- \frac{1}{2}-1} e^{-z \mu^2 - \frac{h^2}{4z}}dz\\
= & \frac{\sigma^2}{\Gamma(\beta) \Gamma(\frac{1}{2})} \left( \frac{|h|}{2 \mu}\right)^{\beta - \frac{1}{2}} K_{\beta - \frac{1}{2}}\left( \mu |h| \right), \quad h\geq 0
\end{align*}
where $K_\nu$ is the modified Bessel function with intergal representation given by
\begin{equation}
\int_0^\infty x^{\nu -1} \exp \left\lbrace -\beta x^p - \alpha x^{-p} \right\rbrace dx = \frac{2}{p} \left( \frac{\alpha}{\beta} \right)^\frac{\nu}{2p} K_\frac{\nu}{p} \left(2 \sqrt{\alpha \beta} \right), \quad p,\alpha,\beta,\nu >0 \label{formula:K}
\end{equation}
(see for example \cite{GR}, formula 3.478). We observe that $K_\nu=K_{-\nu}$ and $K_\frac{1}{2}(x) = \sqrt{\frac{\pi}{2x}}e^{-x}$. Moreover,
\begin{equation}
K_\nu(x) \approx \frac{2^{\nu - 1} \Gamma(\nu)}{x^{\nu}} \quad \textrm{ for } \quad x \to 0^+
\end{equation}
(\cite[pag. 136]{leb}) and
\begin{equation}
K_\nu(x) \approx \sqrt{\frac{\pi}{2x}} e^{-x} \quad  \textrm{ for } \quad x \to \infty.
\end{equation}
Thus, we get that
\begin{equation}
Cov_X(h) \approx \mu^{1 - 2\beta}, \quad  \textrm{ for } \quad h \to 0^+
\end{equation}
and
\begin{equation}
Cov_X(h) \approx \left( \frac{h}{ \mu}\right)^{\beta } \frac{1}{h} e^{-\mu h}, \quad \textrm{ for } \quad h \to \infty.
\end{equation}
\end{remark}

We study the covariance of \eqref{sol1-intro}. Recall that, a stable process $S$ of order $\alpha$ with density $g$ is characterized by
\begin{align*}
\widehat{g}(\xi, t) = \mathbb{E} e^{i\xi S(t)} = e^{- \sigma^2 |\xi |^\alpha t}, \quad \alpha \in (0,2].
\end{align*}
Consider two independent stable processes $S_1(w)$, $S_2(w)$, $w\geq 0$, with $\sigma^2_1=1$ and $\sigma^2_2=2\mu \cos \frac{\pi \alpha}{2}$. Let $g_1(x,w)$, $x \in \mathbb{R}$, $w\geq0$ and $g_2(x,w)$, $x \in \mathbb{R}$, $w\geq0$ be the corresponding density laws. Then, the following result holds true.
\begin{theorem}
The covariance function of \eqref{sol1-intro} is 
\begin{equation}
Cov_X(h) = \frac{\sigma^2}{\Gamma(\beta)} \int_0^\infty w^{\beta -1} e^{-w\mu^2 } \int_{-\infty}^{+\infty} g_1(h-z, w) g_2(z, w) dz\, dw
\end{equation}
or
\begin{equation}
Cov_X(h) = \frac{\sigma^2}{\mu^{2\beta}} \mathbb{E} g_{S_1+S_2}(h, W_\beta)
\end{equation}
and $W_\beta$ is a gamma r.v. with parameters $\mu^2, \beta$.
\end{theorem}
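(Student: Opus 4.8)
The plan is to obtain $Cov_X$ as the inverse Fourier transform of the spectral density already computed in the preceding theorem, and then to recognise the resulting $\tau$-integral as a convolution of two stable densities. Since $f(\tau)=\int_{\mathbb R}e^{i\tau h}Cov_X(h)\,dh$, Fourier inversion gives
\begin{equation*}
Cov_X(h)=\frac{1}{2\pi}\int_{\mathbb R}e^{-i\tau h}\,\frac{\sigma^2}{\bigl(\mu^2+2|\tau|^\alpha\mu\cos\frac{\pi\alpha}{2}+|\tau|^{2\alpha}\bigr)^\beta}\,d\tau ,
\end{equation*}
exactly as in the remark following the previous theorem. This single integral is the starting point, and no further use of the white-noise structure is needed.

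First I would rewrite the denominator by means of the Gamma (Bochner) subordination identity $A^{-\beta}=\frac{1}{\Gamma(\beta)}\int_0^\infty w^{\beta-1}e^{-wA}\,dw$, valid for $A>0$, $\beta>0$, taking $A=\mu^2+2\mu\cos\frac{\pi\alpha}{2}|\tau|^\alpha+|\tau|^{2\alpha}$. This factors the $\tau$-dependence into the product
\begin{equation*}
e^{-w\mu^2}\cdot e^{-2\mu\cos\frac{\pi\alpha}{2}\,w|\tau|^\alpha}\cdot e^{-w|\tau|^{2\alpha}} .
\end{equation*}
I would then identify the last two factors as the characteristic functions $\widehat g_2(\tau,w)=e^{-2\mu\cos\frac{\pi\alpha}{2}\,w|\tau|^\alpha}$ and $\widehat g_1(\tau,w)=e^{-w|\tau|^{2\alpha}}$ (so that $S_1$ carries the exponent $2\alpha$ with $\sigma_1^2=1$ and $S_2$ the exponent $\alpha$ with $\sigma_2^2=2\mu\cos\frac{\pi\alpha}{2}$), whence their product is the characteristic function of the independent sum $S_1(w)+S_2(w)$.

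Next I would interchange the $w$- and $\tau$-integrations and perform the inverse Fourier transform in $\tau$ for each fixed $w>0$. Since $\widehat g_1(\tau,w)\widehat g_2(\tau,w)$ is the characteristic function of $S_1(w)+S_2(w)$, Fourier inversion yields
\begin{equation*}
\frac{1}{2\pi}\int_{\mathbb R}e^{-i\tau h}\,\widehat g_1(\tau,w)\widehat g_2(\tau,w)\,d\tau=g_{S_1+S_2}(h,w)=\int_{\mathbb R}g_1(h-z,w)\,g_2(z,w)\,dz ,
\end{equation*}
the last equality being the convolution formula for the density of a sum of independent variables. Substituting back produces the first claimed expression, and the probabilistic form follows at once: multiplying and dividing by $(\mu^2)^\beta$ turns $\frac{(\mu^2)^\beta}{\Gamma(\beta)}w^{\beta-1}e^{-\mu^2 w}$ into the density of a Gamma$(\mu^2,\beta)$ variable $W_\beta$, so that $Cov_X(h)=\frac{\sigma^2}{\mu^{2\beta}}\,\mathbb E\,g_{S_1+S_2}(h,W_\beta)$.

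The step demanding most care is the justification of the two interchanges (Fubini on the $(w,\tau)$ integral and the termwise inversion). For each fixed $w>0$ the factor $e^{-w|\tau|^{2\alpha}}$ makes $\widehat g_1\widehat g_2$ integrable, so the inner inversion is unconditionally valid and returns a genuine bounded density $g_{S_1+S_2}(\cdot,w)$. The outer $w$-integral converges because of $e^{-w\mu^2}$ at infinity and, near $w=0$, because by self-similarity $g_{S_1+S_2}(0,w)$ is of order $w^{-1/(2\alpha)}$, so the integrand behaves like $w^{\beta-1-1/(2\alpha)}$; this is integrable precisely when $2\alpha\beta>1$, which is exactly the condition under which $f\in L^1(\mathbb R)$ and $Cov_X$ exists as an ordinary function, the identity being read in the subordinated sense otherwise. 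I would also flag the bookkeeping point that $S_1$ must be taken of stability index $2\alpha$ for the characteristic functions to reproduce the $|\tau|^{2\alpha}$ term in the denominator.
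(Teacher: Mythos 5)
Your proposal is correct and follows essentially the same route as the paper: apply the Gamma subordination identity to the denominator of the spectral density from the preceding theorem, recognise the factors $e^{-w|\tau|^{2\alpha}}$ and $e^{-2\mu\cos\frac{\pi\alpha}{2}w|\tau|^{\alpha}}$ as the characteristic functions of $S_1(w)$ (index $2\alpha$) and $S_2(w)$ (index $\alpha$), and invert to obtain the convolution $g_{S_1+S_2}$ subordinated by a Gamma$(\mu^{2},\beta)$ time. Your added remarks on the stability index of $S_1$ and on the integrability condition $2\alpha\beta>1$ are correct refinements of points the paper passes over silently.
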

\begin{proof}
Notice that
\begin{align*}
f(\tau) = \frac{\sigma^2}{\Gamma(\beta)} \int_0^\infty w^{\beta -1} e^{-w (\mu^2 + 2|\tau |^\alpha \mu \cos \frac{\pi \alpha}{2} + |\tau |^{2\alpha})} dw
\end{align*}
where
\begin{align*}
e^{- 2\mu \cos \frac{\pi \alpha}{2} |\tau |^\alpha w }= \mathbb{E} e^{i\tau S_2(w)}= \widehat{g_2}(\tau, w) \quad \textrm{and} \quad  e^{-|\tau |^{2\alpha} w} = \mathbb{E} e^{i\tau S_1(w)}=\widehat{g_1}(\tau, w).
\end{align*}
Thus, 
\begin{align*}
f(\tau) = \frac{\sigma^2}{\mu^{2\beta}} \mathbb{E} [\widehat{g_1}(\tau, W_\beta)\,\widehat{g_2}(\tau, W_\beta)]
\end{align*}
from which, we immediately get that
\begin{align*}
CovX(h) = & \frac{\sigma^2}{\mu^{2\beta}}  \mathbb{E}\left[ \int_{-\infty}^{+\infty} g_1(h-z, W_\beta) g_2(z, W_\beta) dz \right]\\
= & \frac{\sigma^2}{\Gamma(\beta)} \int_0^\infty w^{\beta -1} e^{-w\mu } \int_{-\infty}^{+\infty} g_1(h-z, w) g_2(z, w) dz\, dw
\end{align*}
\end{proof}

\section{Fractional powers of higher-order operators}

We focus our attention on the following equation
\begin{equation}
\left( \mu - \frac{d^2}{d t^2} \right)^{\beta} X(t) = \mathcal{E}(t), \quad t\geq 0,\; \mu>0,\; \beta>0  \label{eqn3-attention}
\end{equation}
that is, to the equation \eqref{sol2-intro-pde} for $n=1$.
\begin{theorem}
\label{aaawwqqT}
The representation of a solution to the equation \eqref{eqn3-attention} can be written as
\begin{equation}
	X(t) \, = \, \frac{1}{\Gamma \left(\beta \right)} \int_0^\infty w^{\beta -1} e^{-\mu w} \int_{-\infty}^{+\infty} u_{2}(x,w)  \mathcal{E}(t+x) \, dx\, dw, \quad \beta>0,\; \mu>0.
	\label{aaawwqq}
\end{equation}
Moreover, the spectral function of \eqref{eqn3-attention} reads
\begin{equation}
f(\tau) = \frac{\sigma^2}{(\mu + \tau^2)^{2\beta}}
\end{equation}
and the corresponding covariance function has the form
\begin{align}
Cov_X(h) = & \frac{\sigma^2}{\mu^{2\beta}} \mathbb{E} \left[ \frac{e^{-\frac{h^2}{4W_{2\beta}}}}{2\sqrt{\pi W_{2\beta}}} \right] =  \frac{2\sigma^2}{\Gamma\left(2\beta \right)} \left( \frac{|h|}{2 \sqrt{\mu}} \right)^{2\beta} K_{2\beta} (|h| \sqrt{\mu})
\end{align}
where $W_{2\beta}$ is a gamma r.v. with parameters $\mu$, $2\beta$.
\end{theorem}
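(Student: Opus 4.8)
The plan is to treat the three assertions in turn, reusing the operational calculus already set up in Section 2. For the representation \eqref{aaawwqq} I would start exactly as in the proof of \eqref{110THM}, from the subordination identity
\begin{equation*}
\left( \mu - \frac{d^2}{dt^2} \right)^{-\beta} = \frac{1}{\Gamma(\beta)} \int_0^\infty w^{\beta-1}\, e^{-w\left(\mu - \frac{d^2}{dt^2}\right)}\, dw
\end{equation*}
applied to $\mathcal{E}(t)$, and then factor $e^{-w(\mu - d^2/dt^2)} = e^{-\mu w}\, e^{w\, d^2/dt^2}$. The operator $e^{w\, d^2/dt^2}$ is the heat semigroup, whose kernel is the fundamental solution $u_2(x,w) = e^{-x^2/(4w)}/\sqrt{4\pi w}$ of \eqref{h-o-pde} with $n=1$; acting on the white noise through the translation identity \eqref{15} it produces $\int_{\mathbb{R}} u_2(x,w)\,\mathcal{E}(t+x)\,dx$ (the kernel being even in $x$, the sign of the shift is immaterial). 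Substituting back yields \eqref{aaawwqq}.

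For the spectral function I would repeat verbatim the double-integral computation carried out in the proof of the spectral-density theorem of Section 2. Using the white-noise covariance \eqref{CovE} to impose the constraint $h = x_1 - x_2$ and integrating the two heat kernels against $e^{\pm i\tau x_j}$ via the Gaussian Fourier transform
\begin{equation*}
\int_{\mathbb{R}} e^{i\tau x}\, u_2(x,w)\, dx = e^{-\tau^2 w},
\end{equation*}
the $w_1$ and $w_2$ integrations decouple into two identical factors $\int_0^\infty w^{\beta-1} e^{-(\mu+\tau^2)w}\, dw = \Gamma(\beta)(\mu+\tau^2)^{-\beta}$, which gives $f(\tau) = \sigma^2/(\mu+\tau^2)^{2\beta}$.

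The covariance is then obtained by Fourier inversion, $Cov_X(h) = \frac{1}{2\pi}\int_{\mathbb{R}} e^{-i\tau h} f(\tau)\, d\tau$. Reintroducing the gamma representation $(\mu+\tau^2)^{-2\beta} = \frac{1}{\Gamma(2\beta)}\int_0^\infty w^{2\beta-1} e^{-(\mu+\tau^2)w}\, dw$, exchanging the order of integration and performing the Gaussian integral in $\tau$ recovers $u_2(h,w)$ under the $w$-integral, giving precisely $\frac{\sigma^2}{\mu^{2\beta}}\,\mathbb{E}[u_2(h,W_{2\beta})]$ with $W_{2\beta}$ a gamma$(\mu,2\beta)$ variable, in agreement with \eqref{cov-sol2-intro}. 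The closed form follows by reducing
\begin{equation*}
\frac{\sigma^2}{\Gamma(2\beta)}\int_0^\infty w^{2\beta-1} e^{-\mu w}\, \frac{e^{-h^2/(4w)}}{\sqrt{4\pi w}}\, dw = \frac{\sigma^2}{2\sqrt{\pi}\,\Gamma(2\beta)}\int_0^\infty w^{2\beta-\frac{3}{2}}\, e^{-\mu w - \frac{h^2}{4w}}\, dw
\end{equation*}
to the Macdonald-function integral \eqref{formula:K}.

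The delicate point — and the step I would treat most carefully — is the bookkeeping in this last reduction, since the $w^{-1/2}$ coming from the normalization of the Gaussian kernel shifts the exponent of $w$ by a half-integer. Applying \eqref{formula:K} with $p=1$, $\nu = 2\beta - \tfrac{1}{2}$, $a = h^2/4$ and $b = \mu$, the argument of the Bessel function becomes $2\sqrt{ab} = |h|\sqrt{\mu}$ while the emerging index is $\nu/p = 2\beta - \tfrac{1}{2}$, consistent with the $K_{\beta-1/2}$ obtained in the $\alpha=1$ Remark of Section 2 under the substitutions $\beta \mapsto 2\beta$ and $\mu \mapsto \sqrt{\mu}$. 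On this basis I would expect the covariance to take the form $\frac{\sigma^2}{\Gamma(1/2)\,\Gamma(2\beta)}\big(\tfrac{|h|}{2\sqrt{\mu}}\big)^{2\beta - 1/2} K_{2\beta - 1/2}(|h|\sqrt{\mu})$, and I would cross-check the index and the constant prefactor in the statement against this computation before finalizing.
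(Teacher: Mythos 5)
Your proposal is correct and follows essentially the paper's strategy, with the order of the last two steps reversed: the paper first computes $\mathbb{E}X(t)X(t+h)$ directly by convolving the two Gaussian kernels (using the semigroup property $u_2(\cdot,w_1)\ast u_2(\cdot,w_2)=u_2(\cdot,w_1+w_2)$ and the fact that the sum of two independent gamma $(\mu,\beta)$ variables is gamma $(\mu,2\beta)$), and only then Fourier-transforms the covariance to read off $f(\tau)$; you instead obtain $f(\tau)$ first by decoupling the $w_1,w_2$ integrals and recover the covariance by inversion. The two computations are equivalent and both land on $Cov_X(h)=\frac{\sigma^2}{\mu^{2\beta}}\,\mathbb{E}\left[u_2(h,W_{2\beta})\right]$, so the representation and the spectral function are handled exactly as in the paper.

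Your closing cross-check is well taken, and you should trust your own computation over the displayed statement. The closed form you derive,
\begin{equation*}
Cov_X(h)=\frac{\sigma^2}{\Gamma(\tfrac{1}{2})\,\Gamma(2\beta)}\left(\frac{|h|}{2\sqrt{\mu}}\right)^{2\beta-\frac{1}{2}}K_{2\beta-\frac{1}{2}}\left(|h|\sqrt{\mu}\right),
\end{equation*}
is the correct one, while the expression in the theorem (index $2\beta$, exponent $2\beta$, prefactor $2\sigma^2/\Gamma(2\beta)$) is not: in the last line of its proof the paper applies \eqref{formula:K} to $\int_0^\infty \frac{e^{-h^2/(4w)}}{2\sqrt{\pi w}}\,w^{2\beta-1}e^{-\mu w}\,dw$ as if the factor $\frac{1}{2\sqrt{\pi w}}$ were absent, which both drops the constant $\frac{1}{2\sqrt{\pi}}$ and leaves the exponent of $w$ at $2\beta-1$ instead of $2\beta-\frac{3}{2}$, hence the wrong Bessel index. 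A check at $\beta=\frac{1}{2}$, where $f(\tau)=\sigma^2/(\mu+\tau^2)$ inverts to $\frac{\sigma^2}{2\sqrt{\mu}}e^{-\sqrt{\mu}\,|h|}$, confirms your version via $K_{1/2}(x)=\sqrt{\pi/(2x)}\,e^{-x}$ and rules out the paper's; your formula is also the one consistent with the $\alpha=1$ Remark of Section 2 under $\beta\mapsto 2\beta$, $\mu\mapsto\sqrt{\mu}$, as you observe.
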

\begin{proof}
We can formally write
\begin{equation}
	e^{w \frac{d^2}{dt^2}} \, = \, \int_{-\infty}^\infty e^{x\frac{d}{dt}} \frac{e^{-\frac{x^2}{4w}}}{2\sqrt{\pi w}} \, dx 
	\label{}
\end{equation}
so that from \eqref{eqn3-attention} we have that
\begin{align}
	X(t) \, & = \, \frac{1}{\Gamma \left(\beta \right)} \int_0^\infty e^{-\mu w} w^{\beta -1} \, dw \int_{-\infty}^\infty \frac{e^{-\frac{x^2}{4w}}}{2\sqrt{\pi w}} \, e^{x \frac{d}{dt}} \mathcal{E}(t) \, dx \notag \\
	& = \, \frac{1}{\Gamma \left(\beta \right)} \int_0^\infty e^{-\mu w} w^{\beta -1} dw \int_{-\infty}^\infty \frac{e^{-\frac{x^2}{4w}}}{2\sqrt{\pi w}} \mathcal{E}(t+x) \, dx.
	\label{}
\end{align}
By observing that
\begin{align*}
\mathbb{E}\mathcal{E}(t+x_1) \mathcal{E}(t+ h + x_2) = \left\lbrace
\begin{array}{ll}
\sigma^2, & x_1 -x_2 = h\\
0, & otherwise
\end{array}
\right.
\end{align*}
we can write
\begin{align*}
\mathbb{E} X(t)X(t+h) = & \frac{\sigma^2}{\Gamma^2 \left(\beta \right)} \int_0^\infty e^{-\mu w_1} w_1^{\beta -1} dw_1 \int_0^\infty e^{-\mu w_2} w_2^{\beta -1} dw_2   \int_{-\infty}^\infty  \frac{e^{-\frac{x_1^2}{4w_1}}}{2\sqrt{\pi w_1}} \frac{e^{-\frac{(h - x_1)^2}{4w_2}}}{2\sqrt{\pi w_2}} dx_1\\
= & \frac{\sigma^2}{\Gamma^2 \left(\beta \right)} \int_0^\infty e^{-\mu w_1} w_1^{\beta -1} dw_1 \int_0^\infty e^{-\mu w_2} w_2^{\beta -1} dw_2  \frac{e^{-\frac{h^2}{4(w_1+w_2)}}}{2\sqrt{\pi (w_1+w_2)}}\\
= &  \frac{\sigma^2}{\mu^{2\beta}} \mathbb{E} \left[ \frac{e^{-\frac{h^2}{4(W_1+W_2)}}}{2\sqrt{\pi (W_1+W_2)}} \right] \\
= & \frac{\sigma^2}{\mu^{2\beta}} \mathbb{E} \left[ \frac{e^{-\frac{h^2}{4W}}}{2\sqrt{\pi W}} \right] \\
= & \frac{\sigma^2}{\Gamma\left(2\beta \right)} \int_0^\infty \frac{e^{-\frac{h^2}{4w}}}{2\sqrt{\pi w}} w^{2\beta -1} e^{-\mu w} dw\\
= & \frac{2\sigma^2}{\Gamma\left(2\beta \right)} \left( \frac{h}{2 \sqrt{\mu}} \right)^{2\beta} K_{2\beta} (h \sqrt{\mu})
\end{align*}
We notice that
\begin{align*}
Cov_X(h) =\frac{\sigma^2}{\mu^{2\beta}} P(B(W_{2\beta}) \in dh)/dh
\end{align*}
where $B(W_{2\beta})$ is a Brownian motion with random time $W_{2\beta}$. Thus, we obtain that
\begin{align*}
f(\tau) =& \int_{-\infty}^{\infty} e^{i\tau h} Cov_X(h)\, dh = \frac{\sigma^2}{\Gamma\left(2\beta \right)} \int_0^\infty e^{-w \tau^2} w^{2\beta -1} e^{-\mu w} dw = \frac{\sigma^2}{(\mu + \tau^2)^{2\beta}}.
\end{align*}
\end{proof}

An alternative representation of the covariance function above reads
\begin{align*}
\mathbb{E} X(t)X(t+h) = & \frac{\sigma^2}{\Gamma^2 \left(\beta \right)} \int_0^\infty e^{-\mu w_1} w_1^{\beta -1} dw_1 \int_0^\infty e^{-\mu w_2} w_2^{\beta -1} dw_2   \int_{-\infty}^\infty  \frac{e^{-\frac{x_1^2}{4w_1}}}{2\sqrt{\pi w_1}} \frac{e^{-\frac{(x_1-h)^2}{4w_2}}}{2\sqrt{\pi w_2}} dx_1\\
= & 4\sigma^2 \, \int_{-\infty}^{+\infty} \left( \frac{|x_1||x_2 -h|}{4 \mu} \right)^{\beta - \frac{1}{2}}\, K_{\beta - \frac{1}{2}}(\sqrt{\mu}|x_1|) \, K_{\beta - \frac{1}{2}}(\sqrt{\mu} |x_1 - h|) \, dx_1.
\end{align*}

We now pass to the general even-order fractional equation \eqref{sol2-intro-pde}.

\begin{theorem}
\label{aaawwwqqqT}
The representation of a solution to the equation \eqref{sol2-intro-pde} can be written as
\begin{equation}
	X(t) \, = \, \frac{1}{\Gamma \left(\beta \right)} \int_0^\infty w^{\beta -1} e^{-\mu w } \int_{-\infty}^{+\infty}  u_{2n}(x, w) \mathcal{E}(t+x)\, dx\, dw, \quad \beta >0,\; \mu>0.
	\label{aaawwwqqq}
\end{equation}
Moreover, the spectral function of \eqref{aaawwwqqq} is
\begin{equation}
f(\tau) = \frac{\sigma^2}{(\mu + \tau^{2n})^{2\beta}}  
\end{equation}
and the covariance function reads
\begin{equation}
Cov_X(h) =  \frac{\sigma^2}{\mu^{2\beta}} \mathbb{E} \left[ u_{2n}(h, W_{2\beta}) \right]
\end{equation}
where $W_{2\beta}$ is a gamma r.v. with parameters $\mu, 2\beta$.
\end{theorem}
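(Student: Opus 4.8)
The plan is to follow the three-step scheme already used in Theorem~\ref{aaawwqqT} for $n=1$, replacing the Gaussian kernel $u_2$ by the general kernel $u_{2n}$ and reducing every computation to the single Fourier identity
\begin{equation*}
\widehat{u}_{2n}(\tau,w) = \int_{-\infty}^{+\infty} e^{i\tau x}\, u_{2n}(x,w)\, dx = e^{-w\tau^{2n}}, \qquad \tau \in \mathbb{R},\; w\geq 0.
\end{equation*}
To obtain the solution representation \eqref{aaawwwqqq} I would start from the subordination identity
\begin{equation*}
\left( \mu + (-1)^n \frac{d^{2n}}{dt^{2n}} \right)^{-\beta} = \frac{1}{\Gamma(\beta)} \int_0^\infty w^{\beta-1} e^{-\mu w}\, e^{(-1)^{n+1} w \frac{d^{2n}}{dt^{2n}}}\, dw,
\end{equation*}
valid for the positive operator $\mu + (-1)^n d^{2n}/dt^{2n}$. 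The exponential $e^{(-1)^{n+1} w\, d^{2n}/dt^{2n}}$ is exactly the evolution operator of the $2n$-th order heat equation \eqref{h-o-pde}, so applied to $\mathcal{E}$ it equals $\int_{\mathbb{R}} u_{2n}(x,w)\, e^{x\, d/dt}\mathcal{E}(t)\, dx$; invoking the translation property \eqref{15} yields \eqref{aaawwwqqq}, with the mean-square justification via an orthonormal expansion carried out exactly as in the proof of \eqref{110THM}.

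Second, I would establish the Fourier identity. Taking the Fourier transform in $x$ of \eqref{h-o-pde} and using that $(i\tau)^{2n}=(-1)^n\tau^{2n}$, one finds $\partial_w \widehat{u}_{2n} = (-1)^{n+1}(-1)^n \tau^{2n}\, \widehat{u}_{2n} = -\tau^{2n}\, \widehat{u}_{2n}$, which together with $\widehat{u}_{2n}(\tau,0)=1$ (since $u_{2n}(\cdot,0)=\delta$) gives $\widehat{u}_{2n}(\tau,w)=e^{-w\tau^{2n}}$. Two consequences are worth recording: the transform is real and even in $\tau$, hence $u_{2n}(\cdot,w)$ is real and even in $x$; and since $e^{-w\tau^{2n}}$ is rapidly decreasing in $\tau$, the kernel $u_{2n}(\cdot,w)$ is smooth, so all the Fubini and inversion steps below are legitimate.

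Third, for the covariance I would insert \eqref{aaawwwqqq} into $Cov_X(h)=\mathbb{E}[X(t)X(t+h)]$ and use the white-noise covariance, which forces the two noise arguments to coincide and leaves the inner integral
\begin{equation*}
\int_{-\infty}^{+\infty} u_{2n}(x_1,w_1)\, u_{2n}(x_1-h,w_2)\, dx_1 = \big( u_{2n}(\cdot,w_1)*u_{2n}(\cdot,w_2)\big)(h) = u_{2n}(h,w_1+w_2),
\end{equation*}
where evenness of $u_{2n}$ turns the cross-correlation into a convolution and the convolution-semigroup property follows from $\widehat{u}_{2n}(\tau,w_1)\widehat{u}_{2n}(\tau,w_2)=\widehat{u}_{2n}(\tau,w_1+w_2)$. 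Setting $W=w_1+w_2$ and performing the Beta integral $\int_0^1 s^{\beta-1}(1-s)^{\beta-1}\,ds = \Gamma(\beta)^2/\Gamma(2\beta)$ collapses the double gamma integral into a single one against the $\mathrm{Gamma}(\mu,2\beta)$ density, giving $Cov_X(h)=\frac{\sigma^2}{\mu^{2\beta}}\mathbb{E}[u_{2n}(h,W_{2\beta})]$. Finally, taking the Fourier transform of this covariance and using $\widehat{u}_{2n}(\tau,W)=e^{-W\tau^{2n}}$, the $h$-integral produces $e^{-W\tau^{2n}}$ and the $W$-integral the gamma factor, so that $f(\tau)=\frac{\sigma^2}{\Gamma(2\beta)}\int_0^\infty W^{2\beta-1}e^{-W(\mu+\tau^{2n})}\,dW = \frac{\sigma^2}{(\mu+\tau^{2n})^{2\beta}}$.

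The step I expect to be the main obstacle is the convolution-semigroup identity, precisely because for $n\geq 2$ the kernel $u_{2n}$ is sign-varying rather than a probability density, so the clean probabilistic ``random-time-change'' reading available in the Gaussian case of Theorem~\ref{aaawwqqT} is only formal here. The rigorous content is entirely analytic: one must justify the interchange of the order of integration and the Fourier inversion, which is exactly where the rapid decay of $e^{-w\tau^{2n}}$ and the resulting smoothness and integrability of $u_{2n}(\cdot,w)$ do the work. Once the Fourier identity and the semigroup property are secured, the remaining Beta- and Gamma-integral manipulations are routine.
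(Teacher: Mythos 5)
Your proposal is correct and follows essentially the same route as the paper: the gamma-integral (subordination) representation of the negative operator power, the translation property $e^{x\,d/dt}\mathcal{E}(t)=\mathcal{E}(t+x)$, the reduction of the covariance to $\int u_{2n}(x_1,w_1)\,u_{2n}(x_1-h,w_2)\,dx_1=u_{2n}(h,w_1+w_2)$ followed by the Beta/Gamma collapse, and the Fourier identity $\widehat{u}_{2n}(\tau,w)=e^{-w\tau^{2n}}$ for the spectral function. Your only additions are welcome refinements the paper leaves implicit, namely using the evenness of $u_{2n}$ to turn the cross-correlation into a convolution and flagging that the semigroup identity must be justified analytically since $u_{2n}$ is sign-varying for $n\geq 2$.
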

\begin{proof}
The solution $u_{2n} (x, t)$ to	
\begin{equation}
	\frac{\partial}{\partial t} u_{2n} \, = \, \left(-1 \right)^{n+1} \frac{\partial^{2n}}{\partial x^{2n}} u_{2n}
	\label{}
\end{equation}
has Fourier transform
\begin{equation}
	U(\beta, t) \, = \, e^{(-1)^{n+1} \left(-i \beta \right)^{2n} t} \, = \, e^{-\beta^{2n} t}.
	\label{U2n}
\end{equation}
We write
\begin{equation}
	e^{-w \frac{\partial^{2n}}{\partial t^{2n}}} \, = \, \int_{-\infty}^\infty e^{ix \frac{\partial}{\partial t}} u_{2n} (x, w) \, dx.
	\label{}
\end{equation}
Since
\begin{equation}
	U(-i\beta, t) \, = \, e^{- (-1)^n \beta^{2n} t},
	\label{}
\end{equation}
we also write
\begin{equation}
	e^{-w (-1)^n \frac{\partial^{2n}}{\partial t^{2n}}} \, = \, \int_{-\infty}^\infty e^{x \frac{\partial}{\partial t}} u_{2n} (x, w) \, dx.
	\label{}
\end{equation}
In conclusion, we have that
\begin{align}
X(t)  & = \left( \mu + (-1)^n \frac{\partial^{2n}}{\partial t^{2n}} \right)^{-\beta} \mathcal{E}(t)\\  
& = \frac{1}{\Gamma (\beta )} \int_0^\infty dw \, e^{-\mu w} w^{\beta -1} \left( \int_{-\infty}^{+\infty} dx\, u_{2n} (x, w)\, e^{x \frac{\partial}{\partial t}}  \mathcal{E}(t) \right) \notag \\
& = \, \frac{1}{\Gamma (\beta )}\int_0^\infty dw \, e^{-\mu w} w^{\beta -1} \int_{-\infty}^{+\infty} dx\, u_{2n}\left(x, w \right) \, \mathcal{E}(t+x)
\end{align}
and this confirms \eqref{aaawwwqqq}. 

From \eqref{aaawwwqqq}, in view of \eqref{CovE}, we obtain
\begin{align*}
	\mathbb{E} X(t)X(t+h) = &  \frac{\sigma^2}{\Gamma^2 (\beta )}\int_0^\infty dw_1\, w_1^{\beta -1} e^{-\mu w_1} \int_0^\infty dw_2 \, w_2^{\beta -1} e^{-\mu w_2}  \\
	 \cdot & \int_{-\infty}^{+\infty} dx_1\, u_{2n}\left(x_1, w_1 \right)  \int_{-\infty}^{+\infty} dx_2\, u_{2n}\left(x_2, w_2 \right)  \, \delta(x_2-x_1+ h)\\
	 = &  \frac{\sigma^2}{\Gamma^2 (\beta )}\int_0^\infty dw_1\,w_1^{\beta -1} e^{-\mu w_1} \int_0^\infty dw_2 \,  w_2^{\beta -1} e^{-\mu w_2}  \\
	 \cdot & \int_{-\infty}^{+\infty} dx_1\, u_{2n}\left(x_1, w_1 \right) \, u_{2n}\left(x_1-h, w_2 \right)  \\
	 = &  \frac{\sigma^2}{\Gamma^2 (\beta )}\int_0^\infty dw_1\,w_1^{\beta -1} e^{-\mu w_1} \int_0^\infty dw_2 \,  w_2^{\beta -1} e^{-\mu w_2} \, u_{2n}(h, w_1+w_2)\\
	 = & \frac{\sigma^2}{\mu^{2\beta}} \mathbb{E} u_{2n}(h, W_1+W_2).
\end{align*}
By following the same arguments as in the previous proof, we get that
\begin{align*}
\mathbb{E} X(t)X(t+h) = & \frac{\sigma^2}{\mu^{2\beta}} \mathbb{E} u_{2n}(h, W_{2\beta}) = \frac{\sigma^2}{\Gamma (2\beta )}\int_0^\infty dw\,w^{2\beta -1} e^{-\mu w} \, u_{2n}(h, w)
\end{align*}
The spectral density of $X(t)$ is therefore
\begin{align*}
f(\tau) = &  \frac{\sigma^2}{\Gamma (2\beta )}\int_0^\infty dw\,w^{2\beta -1} e^{-\mu w - \tau^{2n} w} = \frac{\sigma^2}{(\mu + \tau^{2n})^{2\beta}} .
\end{align*}
\end{proof}

Theorem \ref{aaawwwqqqT} extends the results of Theorem \ref{aaawwqqT} when even-order heat-type equations are involved.

We now pass to the study of the equation \eqref{sol4-intro-pde} for $n=1$ and $\kappa = \mp 1$, 
\begin{equation}
\left( \mu + \kappa \frac{d^3}{d t^3} \right)^\beta X(t) = \mathcal{E}(t). \label{eq-stella}
\end{equation}
\begin{theorem}
The representation of a solution to the equation \eqref{eq-stella} can be written as
\begin{equation}
	X(t) \, = \, \frac{1}{\Gamma \left(\beta \right)} \int_0^\infty e^{-\mu w} dw \, w^{\beta -1} \int_{-\infty}^\infty \frac{1}{\sqrt[3]{3w}} \textrm{Ai} \left(\frac{\kappa x}{\sqrt[3]{3w}}  \right)\, \mathcal{E}(t+x) \, dx \, dw, \quad \beta>0,\; \mu>0.
	\label{118}
\end{equation}
Moreover,
\begin{equation}
f(\tau) = \frac{\sigma^2}{(\mu +\kappa i \tau^3)^{2\beta}}
\end{equation}
and
\begin{equation}
Cov_{X}(h) = \frac{\sigma^2}{\mu^{2\beta}} \mathbb{E} \left[ \frac{\sigma^2}{\sqrt[3]{3W}} \textrm{Ai} \left(\frac{-\kappa h}{\sqrt[3]{3W}}  \right)\right]
\end{equation}
where $\textrm{Ai}(x)$ is the Airy function and $W$ is a gamma-distributed r.v. with parameters $2\beta$ and $\mu$.
\end{theorem}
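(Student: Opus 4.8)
The plan is to follow, almost verbatim, the scheme of Theorem \ref{aaawwwqqqT}, the only genuinely new ingredient being the explicit third-order kernel. First I would write the fractional power as the Gamma (subordination) integral
\begin{equation*}
\left( \mu + \kappa \frac{d^3}{dt^3} \right)^{-\beta} = \frac{1}{\Gamma(\beta)} \int_0^\infty w^{\beta-1} e^{-\mu w}\, e^{-w \kappa \frac{d^3}{dt^3}}\, dw ,
\end{equation*}
the same identity used in \eqref{13} and in the proof of Theorem \ref{aaawwwqqqT}, valid for $\mu>0$, $\beta>0$.

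The crucial step is to represent the third-order semigroup $e^{-w\kappa\frac{d^3}{dt^3}}$ as a superposition of shift operators. Here the Airy function enters through its Fourier transform $\int_{-\infty}^\infty \textrm{Ai}(y)\, e^{-i\xi y}\, dy = e^{i\xi^3/3}$, together with the self-similarity $\frac{1}{\sqrt[3]{3w}}\textrm{Ai}(x/\sqrt[3]{3w})$, which is the fundamental solution of $\partial_w u = -\partial_x^3 u$ and carries the symbol $e^{i\tau^3 w}$. Replacing $x\mapsto \kappa x$ flips the sign of the cubic symbol, so that
\begin{equation*}
e^{-w\kappa\frac{d^3}{dt^3}} = \int_{-\infty}^\infty \frac{1}{\sqrt[3]{3w}}\, \textrm{Ai}\!\left(\frac{\kappa x}{\sqrt[3]{3w}}\right) e^{x\frac{d}{dt}}\, dx
\end{equation*}
reproduces the operator on the left in Fourier variables. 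Applying this to $\mathcal{E}$ and using the translation rule $e^{x\frac{d}{dt}}\mathcal{E}(t)=\mathcal{E}(t+x)$ — justified in the mean-square sense by the orthonormal-expansion argument already spelled out after \eqref{shift} — yields the representation \eqref{118}.

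For the covariance I would insert \eqref{118} into $\mathbb{E}X(t)X(t+h)$ and use $\mathbb{E}\mathcal{E}(t+x_1)\mathcal{E}(t+h+x_2)=\sigma^2\delta(x_1-x_2-h)$ as in \eqref{CovE}. Writing $K(x,w)=\frac{1}{\sqrt[3]{3w}}\textrm{Ai}(\kappa x/\sqrt[3]{3w})$, the four integrations collapse to
\begin{equation*}
Cov_X(h) = \frac{\sigma^2}{\Gamma^2(\beta)} \int_0^\infty\!\!\int_0^\infty w_1^{\beta-1} w_2^{\beta-1} e^{-\mu(w_1+w_2)} \left[ \int_{-\infty}^\infty K(x_1,w_1)\,K(x_1-h,w_2)\, dx_1 \right] dw_1\, dw_2 .
\end{equation*}
The inner integral is then evaluated through the Chapman--Kolmogorov (convolution semigroup) property of the third-order heat kernel, which combines $w_1,w_2$ into $w_1+w_2$, exactly as in the even-order case and in the general formula \eqref{cov-aa}. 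Since the sum of two independent $\mathrm{Gamma}(\beta,\mu)$ weights is $\mathrm{Gamma}(2\beta,\mu)$, this produces the stated $\frac{\sigma^2}{\mu^{2\beta}}\mathbb{E}[\,\cdot\,]$ form; taking the Fourier transform term by term and using $\int_0^\infty w^{2\beta-1}e^{-(\mu+\kappa i\tau^3)w}\,dw=\Gamma(2\beta)(\mu+\kappa i\tau^3)^{-2\beta}$ delivers the spectral function $f(\tau)$ as in \eqref{spect-func-aa}.

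The main obstacle is the rigour of these operator manipulations. Unlike the Gaussian kernel of Theorem \ref{aaawwqqT}, the Airy kernel $K(x,w)$ is sign-changing and only conditionally integrable, so the interchanges of integration, the representation of $e^{-w\kappa\frac{d^3}{dt^3}}$, and above all the semigroup identity in the covariance step must be read as the Fourier-level statements $e^{i\kappa\tau^3 w}$ rather than as genuine probabilistic convolutions. The real delicacy is that, the odd-order kernel being non-symmetric, the correlation integral appearing above does not coincide with the convolution that furnishes the clean $w_1+w_2$ combination; reconciling the two is precisely what threads the sign of $\kappa$ through the argument of $\textrm{Ai}$ in $Cov_X(h)$ and through the factor $\kappa i\tau^3$ in $f(\tau)$, and keeping this bookkeeping consistent for both cases $\kappa=\pm 1$ is where the calculation is most error-prone.
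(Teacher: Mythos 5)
Your proposal follows the paper's proof essentially step for step: the Gamma-integral representation of $\left(\mu+\kappa\frac{d^3}{dt^3}\right)^{-\beta}$ as in \eqref{13}, the representation of $e^{\mp w\frac{d^3}{dt^3}}$ as an Airy-kernel superposition of shift operators via $\int_{-\infty}^{\infty}e^{\theta x}u(x,w)\,dx=e^{\pm w\theta^3}$ (the paper derives this from the Fourier transform \eqref{fur-3ord}), the translation rule \eqref{shift} applied to $\mathcal{E}$, the delta-collapse of the white-noise correlation as in \eqref{CovE}, the merging of $w_1,w_2$ into a $\mathrm{Gamma}(2\beta,\mu)$ variable, and the final Fourier transform. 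So the route is the same as the paper's, and the first part (the representation \eqref{118}) is handled exactly as in the paper.

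One remark on the ``delicacy'' you flag in your last paragraph, since it is the only point where you depart from simply reproducing the argument: your suspicion is well founded, and the paper itself passes over this step silently. The covariance requires the \emph{correlation} integral $\int_{\mathbb{R}}u(x_1,w_1)\,u(x_1-h,w_2)\,dx_1$, whose Fourier transform in $h$ is $\widehat{u}(\tau,w_1)\,\widehat{u}(-\tau,w_2)=e^{-iw_1\tau^3}e^{+iw_2\tau^3}$, not the convolution symbol $e^{-i(w_1+w_2)\tau^3}$. For the symmetric even-order kernels of Theorems \ref{aaawwqqT} and \ref{aaawwwqqqT} correlation and convolution coincide, but for the Airy kernel they do not, and no bookkeeping of the sign of $\kappa$ reconciles them: carried through honestly, the two Gamma integrals factor as $(\mu+i\tau^3)^{-\beta}(\mu-i\tau^3)^{-\beta}$ and give $f(\tau)=\sigma^2(\mu^2+\tau^6)^{-\beta}$, which is real and nonnegative as a spectral density of a real stationary process must be (compare the analogous honest computation in Section 2), whereas the stated $\sigma^2(\mu+\kappa i\tau^3)^{-2\beta}$ is complex and the stated covariance is not an even function of $h$. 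So if your goal is to reproduce the theorem as printed you must assert the $w_1+w_2$ collapse exactly as the paper does; if your goal is a self-consistent derivation, this single step --- which you correctly isolated but did not resolve --- has to be replaced by the $\widehat{u}(\tau,w_1)\widehat{u}(-\tau,w_2)$ computation.
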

\begin{proof}
By following the approach adopted above, after some calculation we can write that
\begin{equation}
	X^{-}(t) \, = \, \frac{1}{\Gamma \left(\beta \right)} \int_0^\infty w^{\beta -1} e^{-\mu w + w \frac{d^3}{dt^3}} \, \mathcal{E}(t) \, dw
	\label{118a}
\end{equation}
is the solution to
\begin{equation}
\left( \mu - \frac{d^3}{d t^3} \right)^\beta X(t) = \mathcal{E}(t)
\end{equation}
whereas
\begin{equation}
	X^{+}(t) \, = \, \frac{1}{\Gamma \left(\beta \right)} \int_0^\infty  w^{\beta -1} e^{-\mu w - w \frac{d^3}{dt^3}} \, \mathcal{E}(t) \, dw
	\label{118b}
\end{equation}
is the solution to
\begin{equation}
\left( \mu + \frac{d^3}{d t^3} \right)^\beta X(t) = \mathcal{E}(t)
\end{equation}
The third-order heat type equation
\begin{equation}
	\frac{\partial}{\partial t} u \, = \, \kappa\, \frac{\partial^3}{\partial x^3} u, \qquad u(x, 0) \, = \, 0,
	\label{3-ord-eq}
\end{equation}
has solution, for $\kappa=-1$,
\begin{equation}
	u(x, t) \, = \, \frac{1}{\sqrt[3]{3t}} \, \textrm{Ai} \left(\frac{x}{\sqrt[3]{3t}} \right), \qquad x \in \mathbb{R}, t>0,
	\label{}
\end{equation}
with Fourier transform
\begin{equation}
	\int_{-\infty}^\infty e^{i\beta x} u(x, t) \, dx \, = \, e^{-it \beta^3}.
	\label{fur-3ord}
\end{equation}
Formula \eqref{fur-3ord} leads to the integral
\begin{align*}
	\int_{-\infty}^\infty e^{\theta x} u(x, t) \, dx \, = \, e^{t \theta^3}, \quad \theta \in \mathbb{R}
\end{align*}
because of the asymptotic behaviour of the Airy function (see \cite{AccOrs} and \cite{LiWong}).
The solution to \eqref{sol4-intro-pde} with $n=1$ (that is $\kappa=-1$) is therefore \eqref{118a}. 

The equation \eqref{3-ord-eq} has solution, for $\kappa=+1$, given by
\begin{equation}
	u(x, t) \, = \, \frac{1}{\sqrt[3]{3t}} \, \textrm{Ai} \left(\frac{-x}{\sqrt[3]{3t}} \right), \qquad x \in \mathbb{R}, t>0.
	\label{}
\end{equation}
Thus, by following the same reasoning as before, we arrive at
\begin{align*}
	\int_{-\infty}^\infty e^{\theta x} u(x, t) \, dx \, = \, e^{-t \theta^3}, \quad \theta \in \mathbb{R}
\end{align*}
and we obtain that \eqref{118b} solves \eqref{eq-stella} with $\kappa=+1$ is \eqref{118b}.

In light of \eqref{CovE} we get
\begin{align*}
\mathbb{E} [X^-(t) \,X^-(t+h)] \, = & \frac{\sigma^2 }{\Gamma^2 \left(\beta \right)} \int_0^\infty e^{-\mu w_1} dw_1 \, w_1^{\beta -1} \int_0^\infty e^{-\mu w_2} dw_2 \, w_2^{\beta -1} \\
\cdot & \int_{-\infty}^\infty \frac{1}{\sqrt[3]{3w_1}} \textrm{Ai} \left(\frac{x_1}{\sqrt[3]{3w_1}}  \right)  \frac{1}{\sqrt[3]{3w_2}} \textrm{Ai} \left(\frac{x_1- h}{\sqrt[3]{3w_2}}  \right)\, dx_1\\
= &  \frac{\sigma^2}{\Gamma^2 \left(\beta \right)} \int_0^\infty e^{-\mu w_1} dw_1 \, w_1^{\beta -1} \int_0^\infty e^{-\mu w_2} dw_2 \, w_2^{\beta -1}\\
\cdot & \frac{1}{\sqrt[3]{3(w_1 + w_2)}} \textrm{Ai} \left(\frac{h}{\sqrt[3]{3(w_1+w_2)}}  \right)\\
= & \frac{\sigma^2}{\mu^{2\beta}} \mathbb{E} \left[ \frac{1}{\sqrt[3]{3W_{2\beta}}} \textrm{Ai} \left(\frac{h}{\sqrt[3]{3W_{2\beta}}}  \right) \right].
\end{align*}
From the Fourier transform \eqref{fur-3ord}, we get that
\begin{align*}
f^{-}(\tau) = & \frac{\sigma^2}{\mu^{2\beta}} \int_{\mathbb{R}} e^{i \tau h}\,  \mathbb{E} \left[ \frac{1}{\sqrt[3]{3W_{2\beta}}} \textrm{Ai} \left(\frac{h}{\sqrt[3]{3W_{2\beta}}}  \right) \right] dh \\
= & \frac{\sigma^2}{\mu^{2\beta}} \mathbb{E} \left[ e^{-i \tau^3 W_{2\beta}} \right] \\
= & \frac{\sigma^2}{(\mu + i \tau^3)^{2\beta}} \\
= & \frac{\sigma^2e^{-i2\beta\arctan \frac{\tau^3}{\mu}}}{(\mu^2 + \tau^6)^\beta}.
\end{align*}
Also, we obtain that
\begin{align*}
\mathbb{E} [X^+(t) \,X^+(t+h)] \, = & \frac{\sigma^2}{\mu^{2\beta}}\mathbb{E} \left[ \frac{1}{\sqrt[3]{3W_{2\beta}}} \textrm{Ai} \left(\frac{-h}{\sqrt[3]{3W_{2\beta}}}  \right) \right].
\end{align*}
with 
\begin{equation}
f^{+}(\tau)=\frac{\sigma^2}{(\mu - i \tau^3)^{2\beta}} = \frac{\sigma^2e^{+i2 \beta\arctan \frac{\tau^3}{\mu}}}{(\mu^2 + \tau^6)^\beta}.
\end{equation}
\end{proof}

\begin{theorem}
The representation of a solution to the following equation 
\begin{align*}
\left( \mu + \kappa \frac{\partial^{2n+1}}{\partial t^{2n+1}} \right)^\beta X(t) = \mathcal{E}(t), \quad n=1,2, \ldots
\end{align*}
reads
\begin{align*}
X(t) = \frac{1}{\Gamma(\beta)} \int_0^\infty w^{\beta -1} e^{-\mu w} \int_{-\infty}^{+\infty} u_{2n+1}(\kappa x, w)  \mathcal{E}(t+x) dw dx, \quad \beta>0,\; \mu>0.
\end{align*}
Moreover, the covariance function 
\begin{align*}
Cov_X(h) = \frac{\sigma^2}{\mu^{2\beta}} \mathbb{E} u_{2n+1}(\kappa h, W_{2\beta}).
\end{align*}
has Fourier transform
\begin{align*}
f(\tau) = \frac{\sigma^2}{(\mu + \kappa i \tau^{2n+1})^{2\beta}} = \frac{\sigma^2 e^{-i2\beta \kappa \arctan \frac{\tau^{2n+1}}{\mu}}}{(\mu^2 + \tau^{2(2n+1)})^\beta}. 
\end{align*}  

\end{theorem}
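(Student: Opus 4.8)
The plan is to carry over, almost verbatim in structure, the arguments already used for the even-order equation (Theorem~\ref{aaawwwqqqT}) and for the third-order equation~\eqref{eq-stella}, of which the present statement is the common generalization; the only genuinely new ingredient is the sign bookkeeping forced by the fact that the odd-order kernel $u_{2n+1}$ is neither even nor a bona fide probability density. First I would write $X(t)=\bigl(\mu+\kappa\tfrac{\partial^{2n+1}}{\partial t^{2n+1}}\bigr)^{-\beta}\mathcal E(t)$ and expand the negative power through the Gamma subordination identity, obtaining
\begin{equation*}
X(t)=\frac{1}{\Gamma(\beta)}\int_0^\infty w^{\beta-1}e^{-\mu w}\,e^{-\kappa w\frac{\partial^{2n+1}}{\partial t^{2n+1}}}\mathcal E(t)\,dw .
\end{equation*}

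The next step is to turn the operator exponential into an integral against the heat-type kernel. From $\partial_w u_{2n+1}=(-1)^n\partial_x^{2n+1}u_{2n+1}$ one reads off the Fourier transform $\widehat{u}_{2n+1}(\xi,w)=e^{-i\xi^{2n+1}w}$ (for $n=1$ this is exactly~\eqref{fur-3ord}). Analytically continuing the exponent and performing the reflection $x\mapsto\kappa x$, one would establish the operator identity $e^{-\kappa w\partial_t^{2n+1}}=\int_{\mathbb R}u_{2n+1}(\kappa x,w)\,e^{x\partial_t}\,dx$, whose verification hinges on $\kappa^{2n+1}=\kappa$ together with the factor $(-1)^{n+1}$ coming from the bilateral Laplace transform of $u_{2n+1}$. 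Feeding this into the line above and applying the shift rule $e^{x\partial_t}\mathcal E(t)=\mathcal E(t+x)$ (already justified in the first theorem) produces the asserted representation of $X(t)$.

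For the covariance I would proceed exactly as in the proof of Theorem~\ref{aaawwwqqqT}: insert the white-noise rule~\eqref{CovE}, namely $\mathbb E\,\mathcal E(t+x_1)\mathcal E(t+h+x_2)=\sigma^2\delta(x_1-x_2-h)$, and integrate out one spatial variable, leaving a double $w$-integral whose inner spatial factor is a correlation of two copies of $u_{2n+1}$. The Chapman--Kolmogorov property $\int_{\mathbb R}u_{2n+1}(\kappa x_1,w_1)\,u_{2n+1}(\kappa x_1-\kappa h,w_2)\,dx_1=u_{2n+1}(\kappa h,w_1+w_2)$, the odd-order analogue of the semigroup identity used for $u_{2n}$, collapses it to $u_{2n+1}(\kappa h,w_1+w_2)$, and the fact that the sum of two independent $\mathrm{Gamma}(\mu,\beta)$ variables is $\mathrm{Gamma}(\mu,2\beta)$ then yields $Cov_X(h)=\frac{\sigma^2}{\mu^{2\beta}}\,\mathbb E\,u_{2n+1}(\kappa h,W_{2\beta})$. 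Finally, Fourier-transforming and using $\int_{\mathbb R}e^{i\tau h}u_{2n+1}(\kappa h,w)\,dh=\widehat u_{2n+1}(\kappa\tau,w)=e^{-i\kappa\tau^{2n+1}w}$ together with $\mathbb E\,e^{-\lambda W_{2\beta}}=\mu^{2\beta}/(\mu+\lambda)^{2\beta}$ at $\lambda=i\kappa\tau^{2n+1}$ gives $f(\tau)=\sigma^2/(\mu+\kappa i\tau^{2n+1})^{2\beta}$, which I would then rewrite in polar form as $\sigma^2 e^{-i2\beta\kappa\arctan(\tau^{2n+1}/\mu)}/(\mu^2+\tau^{2(2n+1)})^\beta$.

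The main obstacle is precisely the bookkeeping hidden in the two kernel identities above. Because $u_{2n+1}$ is sign-varying and not symmetric, the reflection $x\mapsto\kappa x$ and the placement of the lag $h$ must be tracked with care: the factor $\kappa^{2n+1}=\kappa$ is what transports the sign correctly through the operator exponential and what produces the single $\kappa$ in the final symbol, while the even-order symmetry that rendered the correlation-into-convolution step automatic in Theorem~\ref{aaawwwqqqT} is no longer available and must be replaced by an explicit Fourier computation. Moreover, since $u_{2n+1}$ is only the density of a pseudoprocess, the spatial integrals do not converge absolutely; their meaning, and the legitimacy of interchanging them with the $w$-integrations, rests on the oscillatory-and-exponentially-damped asymptotics of $u_{2n+1}$ (the Airy asymptotics invoked for $n=1$ being the model case), and this is the point that requires the most care.
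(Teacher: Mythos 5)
Your proposal follows exactly the route the paper intends: the paper's own proof of this theorem is the single sentence ``the proof follows the same lines as in the previous theorem,'' and what you write out is precisely those lines (Gamma subordination, kernel representation of the operator exponential, white-noise rule, kernel identity, gamma convolution, Fourier transform). So as a reconstruction of the paper's argument it is faithful. The representation formula, the final Fourier computation $\int_{\mathbb R}e^{i\tau h}u_{2n+1}(\kappa h,w)\,dh=e^{-i\kappa\tau^{2n+1}w}$, and the use of $\mathbb E\,e^{-\lambda W_{2\beta}}=\mu^{2\beta}/(\mu+\lambda)^{2\beta}$ are all correct and match the source.

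There is, however, a genuine gap at the step you yourself single out as the delicate one. The identity you call Chapman--Kolmogorov,
\begin{equation*}
\int_{\mathbb R}u_{2n+1}(\kappa x,w_1)\,u_{2n+1}(\kappa x-\kappa h,w_2)\,dx=u_{2n+1}(\kappa h,w_1+w_2),
\end{equation*}
is not the semigroup property. The semigroup property is the \emph{convolution} $\int u(x,w_1)u(h-x,w_2)\,dx=u(h,w_1+w_2)$; what the white-noise rule $\sigma^2\delta(x_1-x_2-h)$ actually produces is the \emph{correlation} $\int u(x,w_1)u(x-h,w_2)\,dx$. For the even-order kernel these coincide because $u_{2n}$ is even in $x$; for $u_{2n+1}$ they do not. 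The explicit Fourier computation that you correctly say must replace the symmetry argument gives, for the correlation, $\widehat u_{2n+1}(\tau,w_1)\,\widehat u_{2n+1}(-\tau,w_2)=e^{-i\tau^{2n+1}(w_1-w_2)}$, i.e.\ $u_{2n+1}(h,w_1-w_2)$ (reducing to $\delta(h)$ when $w_1=w_2$, by the orthogonality relation $\int\mathrm{Ai}(y)\mathrm{Ai}(y-c)\,dy=\delta(c)$ in the $n=1$ case), not $u_{2n+1}(h,w_1+w_2)$. Keeping the two conjugate factors separate through the gamma integrals --- exactly as the paper does in its Section~2 computation with the stable characteristic functions --- yields $f(\tau)=\sigma^2\big((\mu+i\kappa\tau^{2n+1})(\mu-i\kappa\tau^{2n+1})\big)^{-\beta}=\sigma^2(\mu^2+\tau^{2(2n+1)})^{-\beta}$, which is real and nonnegative as a spectral density of a real stationary process must be, rather than the complex symbol $\sigma^2(\mu+\kappa i\tau^{2n+1})^{-2\beta}$. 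To be fair, the paper's proof of the third-order case makes the identical move, so your proposal is consistent with the source; but the step does not survive the Fourier check you propose, and you should not assert the $w_1+w_2$ identity without it.
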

\begin{proof}
The proof follows the same lines as in the previous theorem.
\end{proof}

\begin{figure}
\includegraphics[scale=.8]{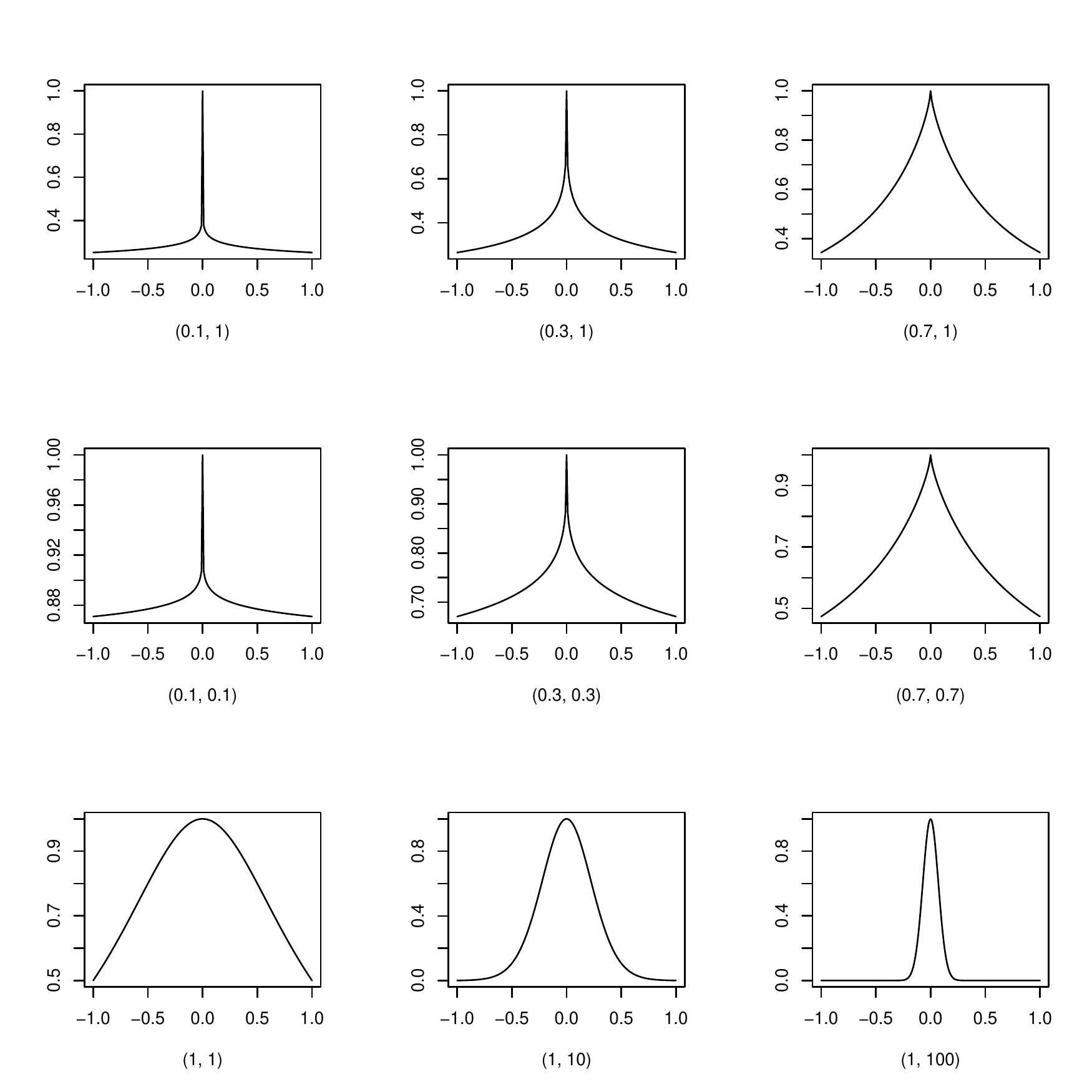}
\caption{The spectral function \eqref{spectralfFig} with different values for the parameters $(\alpha, \beta)$.}
\end{figure}

\end{document}